\def\ZZ{{\mathbb{Z}}}% \ZZ == \mathbb{Z}
\def\QQ{{\mathbb{Q}}}% \QQ == \mathbb{Q}
\def\RR{{\mathbb{R}}}% \RR == \mathbb{R}
\def\CC{{\mathbb{C}}}% \CC == \mathbb{C}
\def\K{{\mathscr{K}}}
\def\PP{{\mathbb{P}}}% \PP == \mathbb{P}
\def\U{{\mathscr{U}}}% \U == \mathscr{U}
\def\H{{\mathscr{H}}}% \H == \mathscr{H}
\newtheorem{them}{Theorem}[section]
\newtheorem{rem}[them]{Remark}
\newtheorem{cl}[them]{Claim}
\begin{document}

\title[Projective manifolds swept out by rational homogeneous varieties]{Classification of embedded projective manifolds swept out by rational homogeneous varieties of codimension one}
\author{Kiwamu Watanabe}
\date{January 9, 2011}

\address{Department of Mathematical Sciences  School of Science and Engineering Waseda University, 
4-1 Ohkubo 3-chome 
Shinjuku-ku 
Tokyo 169-8555 
Japan}
\email{kiwamu0219@fuji.waseda.jp}

\subjclass[2000]{Primary~14J40, 14M17, 14N99, Secondary~14D99.}
\keywords{swept out by rational homogeneous varieties, covered by lines, extremal contraction, Hilbert scheme.}

\maketitle

%%%%%%%%%%%%%%%%%%%%%%%%%%%%%%%%%%%%%%%%%%%%%%%%%%%%%%%%%%%%%

\begin{abstract}
We give a classification of embedded smooth projective varieties swept out by rational homogeneous varieties whose Picard number and codimension are one.
\end{abstract}

\section{Introduction}

In the theory of polarized variety, it is a central problem to classify smooth projective varieties admitting special varieties $A$ as ample divisors.   
In the previous paper \cite{Wa}, we investigated this problem in the case where $A$ is a homogeneous variety. On the other hand, related to the classification problem of polarized varieties, several authors have been studied the structure of embedded projective varieties swept out by special varieties (see \cite{BI,MS,S,W2}).  Inspired by these results, in this short note, we give a classification of embedded smooth projective varieties swept out by rational homogeneous varieties whose Picard number and codimension are one. Our main result is

\begin{them}\label{MT} Let $X \subset \PP^N$ be a complex smooth projective variety of dimension $n \geq 3$ and $A$ an $(n-1)$-dimensional rational homogeneous variety with ${\rm Pic}(A) \cong \ZZ [\mathscr{O}_{A}(1)]$. Assume that $X$ satisfies either,
\begin{enumerate}
\item[\rm (a)] through a general point $x \in X$, there is a subvariety $Z_x \subset X$ such that $(Z_x, \mathscr{O}_{Z_x}(1))$ is isomorphic to $(A, \mathscr{O}_{A}(1))$, or 
\item[\rm (b)] there is a subvariety $Z \subset X$ such that $(Z, \mathscr{O}_{Z}(1))$ is isomorphic to $(A, \mathscr{O}_{A}(1))$ and the normal bundle $N_{Z/X}$ is nef. 
\end{enumerate}
Then $X$ is one of the following:
\begin{enumerate}
\item a projective space $\PP^n$,
\item a quadric hypersurface $Q^n$, 
\item the Grassmannian of lines $G(1,\PP^m)$, 
\item an $E_6$ variety $E_6(\omega_1)$, where $E_{6}(\omega_1) \subset \PP^{26}$ is  the projectivization of the highest weight vector orbit in the 27-dimensional irreducible representation of a simple algebraic group of Dynkin type $E_6$, \item $X$ admits an extremal contraction of a ray $\varphi  :X \rightarrow C$ to a smooth curve whose general fibers are projectively equivalent to $(A, \mathscr{O}_{A}(1))$.
\end{enumerate}
\end{them}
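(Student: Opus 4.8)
The plan is to decide, for the distinguished divisor $Z$ (resp. $Z_x$), what the line bundle $N_{Z/X}$ on $Z\cong A$ is, and to split the argument accordingly; throughout I will use that $A$, being rational homogeneous with $\mathrm{Pic}(A)\cong\ZZ[\mathscr{O}_A(1)]$, is Fano (so $H^i(A,\mathscr{O}_A)=0$ for $i>0$) and is covered by lines under $\mathscr{O}_A(1)$, hence so is $X$. First note that in case (a) the divisor $Z_x$, for $x$ general, is a member of a family $\{Z_t\}_{t\in T}$ whose members sweep out $X$; in particular $Z_x$ is nef, so $N_{Z_x/X}=\mathscr{O}_X(Z_x)|_{Z_x}$ is a nef line bundle on $Z_x\cong A$, hence $N_{Z_x/X}\cong\mathscr{O}_A(k)$ with $k\ge 0$. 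In case (b) the hypothesis that $N_{Z/X}$ is nef gives likewise $N_{Z/X}\cong\mathscr{O}_A(k)$ with $k\ge 0$; if $k\ge 1$ then $N_{Z/X}$ is ample and (using $H^1(Z,N_{Z/X})=H^1(A,\mathscr{O}_A(k))=0$) the Hilbert scheme of $X$ is smooth at $[Z]$ of dimension $h^0(\mathscr{O}_A(k))$, so the deformations of $Z$ sweep out $X$ and a general one passes through a general point; thus case (b) with $k\ge 1$ reduces to case (a) with $k\ge 1$. It remains to treat (a) in the two cases $k=0$ and $k\ge 1$.

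Suppose $k=0$, i.e. $N_{Z_x/X}\cong\mathscr{O}_A$. Then $h^0(Z_x,N_{Z_x/X})=1$ and $h^1(Z_x,N_{Z_x/X})=0$, so the Hilbert scheme of $X$ is smooth of dimension $1$ at $[Z_x]$, the deformations of $Z_x$ are pairwise disjoint (trivial normal bundle) and, by hypothesis, sweep out $X$. I would then show that these deformations are the fibres of a morphism $\varphi:X\to C$ onto a smooth curve $C$ whose general fibre is projectively equivalent to $(A,\mathscr{O}_A(1))$, and that $\varphi$ is the contraction of a $K_X$-negative extremal ray: for a line $\ell$ contained in a general fibre $F\cong A$ one has $-K_X\cdot\ell=-K_A\cdot\ell>0$, and since $\rho(F)=1$ all curves in $F$ are numerically proportional to $\ell$, so the associated contraction is a fibration over a curve and must coincide with $\varphi$. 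This gives case (5).

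Suppose now $k\ge 1$. I would first prove that $Z:=Z_x$ is an \emph{ample} divisor on $X$. It is nef since it moves; and for any irreducible curve $C\subset X$ we get $Z\cdot C>0$: if $C$ is contained in a general member $Z_t$ this follows because $N_{Z_t/X}\cong\mathscr{O}_A(k)$ is ample, while if $C\not\subset Z_t$ for general $t$ then $Z_t\cdot C=0$ would force $C\cap Z_t=\varnothing$, contradicting that the $Z_t$ cover $X$; note also $(Z|_Z)^{n-1}=k^{n-1}(\mathscr{O}_A(1)^{n-1})>0$, so $Z$ is big. Hence $X$ is a smooth projective variety carrying a rational homogeneous variety $A$ of Picard number one as an ample divisor with $\mathscr{O}_X(1)|_A=\mathscr{O}_A(1)$, and the classification of such pairs in \cite{Wa} shows that $X$ is $\PP^n$, $Q^n$, $G(1,\PP^m)$ or $E_6(\omega_1)$, i.e. one of the cases (1)--(4). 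Combining this with the previous paragraph completes the proof.

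\textbf{Main obstacle.} The soft part is $k\ge 1$, which feeds into the earlier classification \cite{Wa} once ampleness of $Z$ is in hand. I expect the delicate point to be the case $k=0$: upgrading the abstract one-dimensional family of deformations of $Z_x$ to an honest equidimensional fibration $\varphi:X\to C$ over a smooth curve (handling possibly degenerate or reducible members of the family and the definition of $C$ via the normalization of the relevant component of the Hilbert scheme), and then verifying that $\varphi$ is literally the contraction of a single $K_X$-negative extremal ray rather than merely a fibre-type morphism.
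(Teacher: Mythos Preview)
Your approach is genuinely different from the paper's. The paper does not split on the degree $k$ of $N_{Z/X}$; instead it uses the flag Hilbert scheme and the Hwang--Mok rigidity theorem (Theorem~\ref{rigidity}) to produce a covering family $\K$ of lines on $X$ coming from lines on the $A$-copies, observes that the associated rationally connected fibration has relative dimension $\ge n-1$, and then invokes Bonavero--Casagrande--Druel (Theorem~\ref{BCD}) to conclude that $\RR_{\ge 0}[\K]$ is an extremal ray whose contraction $\varphi$ realizes this fibration. The dichotomy is then simply $\dim Y\in\{0,1\}$: if $\dim Y=0$ one gets $\rho(X)=1$, so \emph{any} effective divisor is ample and Theorem~\ref{ample} applies; if $\dim Y=1$ one is already in case~(v) with extremality built in.

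Your route has real gaps that the paper's method avoids. First, in the case $k\ge 1$ you argue that $Z$ is ample by checking $Z\cdot C>0$ for every irreducible curve together with $Z^{n}>0$; but strict nefness plus bigness is not known to imply ampleness in general (Kleiman's criterion requires positivity on all of $\overline{NE}(X)\setminus\{0\}$, not just on effective classes), and even your strict-nefness dichotomy misses curves contained only in degenerate limit members $Z_{t_0}$ of the family, where $N_{Z_{t_0}/X}\cong\O_A(k)$ is unavailable. The paper sidesteps this entirely by first obtaining $\rho(X)=1$ from the extremal contraction. Second, your reduction of case~(b) to case~(a), and in particular the claim that a general deformation of $Z$ again satisfies $(Z_t,\O_{Z_t}(1))\cong(A,\O_A(1))$, tacitly uses that smooth deformations of $A$ remain isomorphic to $A$; this is exactly Theorem~\ref{rigidity}, which you do not invoke, and without it hypothesis~(a) is not verified for the deformed divisors (note also that case~(b) with $k=0$ is never reduced to~(a) in your outline). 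Finally, the obstacle you yourself flag in the $k=0$ case---upgrading the one-parameter family to an honest \emph{extremal} contraction rather than just a fibre-type morphism---is precisely what Theorem~\ref{BCD} delivers in the paper's argument, and it is not clear how to obtain it by your direct method.
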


For each case $\rm (i)-(iv)$, the corresponding rational homogeneous variety $A$ is one of Theorem~\ref{ample}.  
We outline the proof of Theorem~\ref{MT}. A significant step is to show the existence of a covering family $\K$ of lines on $X$ induced from lines on rational homogeneous varieties of codimension one (Claim~\ref{family}). Then we see that the rationally connected fibration associated to $\K$ is an extremal contraction of the ray $\RR_{\geq 0}[\K]$. By applying the previous result \cite{Wa}, we obtain our theorem. In this paper, we work over the field of complex numbers.

\section{Preliminaries}

We denote a simple linear algebraic group of Dynkin type $G$ simply by $G$ and for a dominant integral weight $\omega$ of $G$, the minimal closed orbit of $G$ in $\PP(V_{\omega})$ by $G(\omega)$, where $V_{\omega}$ is the irreducible representation space of $G$ with highest weight $\omega$. 
For example, $E_6({\omega}_1)$ is the minimal closed orbit of an algebraic group of type $E_6$ in $\PP(V_{{\omega}_1})$, where ${\omega}_1$ is the first fundamental dominant weight in the standard notation of Bourbaki \cite{Bo}. Every rational homogeneous variety of Picard number one can be expressed by the form. Remark that a rational homogeneous variety $A$ is a Fano variety, i.e., the anti-canonical divisor of $A$ is ample.
If the Picard number of $A$ is one, we have ${\rm Pic}(A) \cong \ZZ[\mathscr{O}_{A}(1)]$, where $\mathscr{O}_{A}(1)$ is a very ample line bundle on $A$. 
We recall two results on rational homogeneous varieties.

\begin{them}[{\cite[Main Theorem]{HM2}, \cite[5.2]{HM1}}]\label{rigidity}
Let $A$ be a rational homogeneous variety of Picard number one. Let $\rho:X\to Z$ be a smooth proper morphism between two varieties. Suppose for some point $y$ on $Z$, the fiber $X_y$ is isomorphic to $A$. Then, for any point $z$ on $Z$, the fiber $X_z$ is isomorphic to $A$.
\end{them}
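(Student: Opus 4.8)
The plan is to follow the strategy of Hwang and Mok, which proceeds through the variety of minimal rational tangents (VMRT). We may assume $Z$ is connected, and it is enough to treat the case where $Z=\Delta$ is the unit disk, since the general statement follows by combining, along chains of disks, the two local assertions: (i) if $X_0\cong A$ then $X_t\cong A$ for $t$ near $0$, and (ii) if $X_t\cong A$ for every $t\neq 0$ then $X_0\cong A$. As $\rho$ is smooth and proper and some fiber is the Fano manifold $A$, every fiber is Fano --- ampleness of the anticanonical bundle is an open condition, and each fiber has $b_2=1$ by deformation invariance of Hodge numbers, hence Picard number one --- and in particular uniruled.

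Next I would bring in minimal rational curves. On $A$ the minimal rational curves --- the lines, for the embedding by the ample generator $\O_A(1)$ --- form an irreducible unsplit dominating family, and for a general point $x$ the associated VMRT $\C_x(A)\subset\PP(T_xA)$ is the classical homogeneous model, whose projective geometry is completely understood. A minimal rational curve lying in a fiber isomorphic to $A$ deforms to the neighboring fibers, so by semicontinuity each fiber $X_z$ carries an unsplit dominating family of rational curves whose members have the same anticanonical degree; since that degree is a deformation invariant, the VMRT $\C_x(X_z)\subset\PP(T_xX_z)$ at a general point is a smooth projective subvariety of the same dimension as $\C_x(A)$, and the VMRT of the central fiber appears as a flat limit of the VMRTs of the general fibers.

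The essential --- and hardest --- step is to show that $\C_x(X_z)$ is projectively equivalent to the homogeneous model $\C_x(A)$. This is where the argument genuinely uses the classification of $A$: one analyzes the graded Lie algebra of infinitesimal projective automorphisms of $\C_x(A)$ together with its prolongations, and deduces that the homogeneous VMRT is rigid among the projective submanifolds that can arise as VMRT of a family of minimal rational curves --- so that neither the nearby fibers nor the flat limit can carry a genuinely different VMRT. For the Hermitian symmetric spaces and for the homogeneous spaces associated to a long simple root this rigidity is obtained by such prolongation arguments; the cases associated to short roots need additional, more delicate input. Granting it, the VMRT of every fiber $X_z$ is the homogeneous one attached to $A$.

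Finally one invokes the Cartan--Fubini type extension theorem of Hwang--Mok: a Fano manifold of Picard number one whose VMRT at a general point is projectively equivalent to the homogeneous VMRT of $A$ is biholomorphic to $A$, since the distribution spanned by the VMRT is integrated by the minimal rational curves and yields a flat structure modeled on $A$ that forces the biholomorphism. Applied fiberwise this gives $X_z\cong A$ for all $z$, establishing (i) and (ii) and hence the theorem. The main obstacle is clearly the VMRT-rigidity step; it is the source of the case analysis and the reason the full result was assembled over a sequence of papers, the homogeneous spaces associated to short roots being the subtlest.
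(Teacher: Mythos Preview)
The paper does not supply its own proof of this theorem: it is quoted verbatim as a result of Hwang--Mok, with references to \cite{HM2} and \cite{HM1}, and is used only as a black box in the argument of Section~3. There is therefore no in-paper proof to compare your proposal against.

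That said, your sketch is an accurate high-level summary of the Hwang--Mok strategy carried out in those cited papers: reduction to a one-parameter family, deformation invariance of the Fano and Picard-number-one properties, construction of a relative family of minimal rational curves, rigidity of the VMRT under specialization (the genuinely hard step, with the short-root cases handled last in \cite{HM2}), and finally the recognition of the fiber from its VMRT structure. So your approach is not different from the paper's---it simply unpacks what the paper imports wholesale. If anything, one could quibble that the final step is not literally the Cartan--Fubini extension theorem (which produces a biholomorphism from a local VMRT-preserving map) but rather the recognition/characterization theorem that a Fano manifold of Picard number one with the homogeneous VMRT is itself the model; the two are closely related in the Hwang--Mok framework, but it is worth keeping the distinction in mind.
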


\begin{them}[\cite{Wa}]\label{ample} Let $X$ be a smooth projective variety and $A$ a rational homogeneous variety of Picard number one. If $A$ is an ample divisor on $X$, $(X, A)$ is isomorphic to $(\PP^n, \PP^{n-1})$, $(\PP^n, Q^{n-1})$, $(Q^n, Q^{n-1})$, $(G(2, \CC^{2l}), C_l(\omega_2))$ or $(E_6({\omega}_1), F_4({\omega}_4))$.
\end{them}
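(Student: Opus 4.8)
The plan is to prove Theorem~\ref{ample} by reducing the classification of pairs $(X,A)$ with $A$ a rational homogeneous ample divisor to the two pieces of input one has for such divisors: the adjunction-theoretic restriction on how $A$ sits inside $X$, and the rigidity of the ambient Fano structure coming from the fact that $A$ itself is extremely special. First I would observe that since $A$ is ample on the smooth $n$-fold $X$, $A$ is connected and the restriction map $\mathrm{Pic}(X)\to\mathrm{Pic}(A)$ is injective with finite cokernel by the Lefschetz-type theorems, so $\rho(X)=\rho(A)=1$ and $X$ is itself Fano of Picard number one (here one uses adjunction $K_A=(K_X+A)|_A$ together with the Fano-ness of $A$ to deduce $-K_X$ is ample). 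Thus $X$ is a Fano manifold of Picard number one containing an ample divisor $A$ isomorphic to a homogeneous space, and the problem becomes: constrain $X$ by its index and by the very ampleness geometry of $\O_A(1)$.

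The key step is to compute the index of $X$. Write $\O_A(1)$ for the ample generator of $\mathrm{Pic}(A)\cong\ZZ$, and let $H$ be the ample generator of $\mathrm{Pic}(X)$, so $A\equiv aH$ and $-K_X\equiv r_X H$ for positive integers $a,r_X$. Restricting to $A$ and using adjunction gives $-K_A=(r_X-a)H|_A$, and since $-K_A=r_A\,\O_A(1)$ with $r_A$ the (known) index of the homogeneous space $A$, comparing the two expressions of $-K_A$ in $\mathrm{Pic}(A)\cong\ZZ[\O_A(1)]$ pins down $r_X$ in terms of the numerical data $(a,r_A)$ and the degree of $H|_A$ relative to $\O_A(1)$. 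The crucial constraint is that the index of a Fano manifold of dimension $n$ and Picard number one satisfies $r_X\le n+1$, with the extremal values $r_X=n+1$ and $r_X=n$ forcing $X\cong\PP^n$ and $X\cong Q^n$ respectively (Kobayashi--Ochiai). So the plan is: show $a=1$ (i.e.\ $A$ is a generator of $\mathrm{Pic}(X)$, which follows because $A$ is very ample and cuts out a hyperplane section generating the Picard group after the Lefschetz identification), whence $r_X=r_A+\dim A$ computed on $A$ forces $r_X$ to be large enough that only a short list of possibilities for $X$ remains.

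From here I would run a case analysis driven by the index $r_X$ and the classification of homogeneous $A$ of Picard number one that can occur as hyperplane-type sections. When $r_X=n+1$ we get $(\PP^n,\PP^{n-1})$; when $r_X=n$ we get a quadric $Q^n$ cut by either a $\PP^{n-1}$ (a hyperplane through $\PP^n$) or a $Q^{n-1}$; the remaining high-index cases correspond to del Pezzo and Mukai manifolds of Picard number one whose classifications are known, and intersecting those lists with the requirement that the ample divisor $A$ be exactly one of the homogeneous spaces $\PP^{n-1},Q^{n-1},C_l(\omega_2),F_4(\omega_4)$ yields the stated pairs, the last two arising as the hyperplane sections $G(2,\CC^{2l})\supset C_l(\omega_2)$ and $E_6(\omega_1)\supset F_4(\omega_4)$. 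The main obstacle will be the middle-index range: ruling out spurious candidate pairs where the numerics permit a homogeneous divisor but no actual embedding exists, and matching each surviving $X$ with the correct homogeneous $A$ as a genuine divisor class. To handle this I would use Theorem~\ref{rigidity} to propagate the homogeneous structure of $A$ across a suitable family (deforming $A$ inside its linear system $|A|$, whose general member is again ample and, by the smoothness of the total space of the pencil, fits the hypotheses of the rigidity theorem), thereby forcing the ambient $X$ to inherit enough homogeneity to be identified with one of the five models; the delicate point is verifying that the linear system $|A|$ furnishes a proper smooth family to which rigidity applies, rather than a merely numerical coincidence of invariants.
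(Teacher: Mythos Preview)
First, note that the present paper does \emph{not} prove Theorem~\ref{ample}: it is quoted from the author's earlier paper \cite{Wa} as a preliminary result, so there is no proof here to compare against. Your proposal should therefore be read as an attempted reconstruction of the argument of \cite{Wa}.

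That said, your sketch contains a genuine error. You claim one can ``show $a=1$ (i.e.\ $A$ is a generator of $\mathrm{Pic}(X)$)''. This is false already for one of the pairs in the conclusion: in $(\PP^n, Q^{n-1})$ the divisor $A=Q^{n-1}$ lies in $|2H|$, so $a=2$. The Lefschetz isomorphism $\mathrm{Pic}(X)\to\mathrm{Pic}(A)$ (for $\dim A\geq 3$) only tells you that the generator $H$ of $\mathrm{Pic}(X)$ restricts to $\O_A(1)$; it says nothing about which multiple of $H$ the divisor class $[A]$ itself is. Consequently the adjunction computation gives $r_X=r_A+a$, not $r_X=r_A+\dim A$; your stated formula is off and the case analysis built on it does not go through as written. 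In particular you cannot simply invoke Kobayashi--Ochiai after forcing $a=1$; the argument must keep track of $a$ and rule out large $a$ by other means (e.g.\ bounding $r_X\le n+1$ gives $a\le n+1-r_A$, which for most homogeneous $A$ is quite restrictive, but not automatically $1$).

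A second, softer gap is the final paragraph. Applying Theorem~\ref{rigidity} to a pencil in $|A|$ only tells you that nearby smooth members are again isomorphic to $A$; it does not by itself pin down $X$. What one actually needs in the middle-index range is either the explicit classification of Fano manifolds of the relevant coindex (del Pezzo, Mukai) and a check of which of those admit a homogeneous hyperplane section, or a direct argument using the geometry of lines on $A$ and on $X$ (this is closer in spirit to how \cite{Wa} and the current paper proceed). Your invocation of rigidity is not wrong, but it is not doing the work you assign to it.
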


For a numerical polynomial $P(t) \in \QQ[t]$, we denote by ${\rm Hilb}_{P(t)}(X)$ the Hilbert scheme of $X$ relative to $P(t)$. More generally, 
for an $m$-tuple of numerical polynomials $\PP(t):=(P_1(t),\cdots,P_m(t))$, denote by ${\rm FH}_{\PP(t)}(X)$ the flag Hilbert scheme of $X$ relative to $\PP(t)$ (see \cite[Section~4.5]{Ser}).  For the Hilbert polynomial of a line $P_1(t)$,  an irreducible component of ${\rm Hilb}_{P_1(t)}(X)$ is called a {\it family of lines} on $X$.  
Let ${\rm Univ}(X)$ be the universal family of ${\rm Hilb}(X)$ with the associated morphisms $\pi :{\rm Univ}(X) \rightarrow {\rm Hilb}(X)$ and $\iota : {\rm Univ}(X) \rightarrow X$. 
For a subset $V$ of ${\rm Hilb}(X)$, $\iota ({\pi}^{-1}(V))$ is denoted by ${\rm Locus}(V) \subset X$. 
A {\it covering family of lines} $\K$ means an irreducible component of $F_1(X)$ satisfying ${\rm Locus}(\K)=X$. For a covering family of lines, we have the following fibration.

\begin{them}\cite{Ca, KMM}\label{RC} Let $X \subset \PP^N$ be a smooth projective variety and $\K$ a covering family of lines. Then there exists an open subset $X^0 \subset X$ and a proper morphism $\varphi : X^0 \rightarrow Y^0$ with connected fibers onto a normal variety, such that any two points on the fiber of $\varphi$ can be joined by a connected chain of finite $\K$-lines. 
\end{them}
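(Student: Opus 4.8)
The plan is to realize the quotient of $X$ by the equivalence relation ``$x$ and $y$ lie on a common connected chain of finitely many $\K$-lines'' and to show that, after removing a proper closed subset, this quotient is represented by a proper fibration onto a normal variety.

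First I would build the parameter spaces of chains. Starting from the universal family $\pi : {\rm Univ}(X)\to \K$, $\iota : {\rm Univ}(X)\to X$ of $\K$-lines, define inductively $\mathrm{Chain}_1 := \K$ and let $\mathrm{Chain}_{m+1}$ be a suitable irreducible component of the fiber product of $\mathrm{Chain}_m$ and $\K$ over $X$, glued along the ``last point'' and ``first point'' evaluation maps; this is a (closed subscheme of a) flag Hilbert scheme in the sense recalled above. Each $\mathrm{Chain}_m$ carries a universal connected chain together with two evaluation morphisms $u_m, v_m : \mathrm{Chain}_m \to X$ recording its two endpoints, and all the structure morphisms are proper because $\K$, as a component of the Hilbert scheme of lines, is a projective family. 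For $x\in X$ set $\mathrm{Locus}_m(x) := v_m\bigl(u_m^{-1}(x)\bigr)$, the union of all chains of length at most $m$ issuing from $x$; it is a closed subvariety of $X$ and $\mathrm{Locus}_m(x)\subseteq \mathrm{Locus}_{m+1}(x)$.

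Next, a dimension count combined with Noetherian induction shows that for $x$ in a dense open subset of $X$ the dimension of $\mathrm{Locus}_m(x)$ is independent of $x$ and stabilizes, for $m\gg 0$, to some integer $d$; one checks moreover that for general $x$ one has $\mathrm{Locus}_m(x)=\mathrm{Locus}_{m+1}(x)$ as sets once the dimension has stabilized, so that the chain-equivalence of two general points is witnessed by chains of bounded length, i.e. by finitely many $\K$-lines. With this uniformity in hand I would invoke the general existence theorem for geometric quotients by a proper prerelation (Campana \cite{Ca}, Koll\'ar--Miyaoka--Mori \cite{KMM}): the algebraic relation cut out in $X\times X$ by the image of $(u_m,v_m):\mathrm{Chain}_m\to X\times X$ for $m\gg 0$ becomes, after restricting to an appropriate dense open $X^0\subset X$, a proper equivalence relation, and its quotient $\varphi : X^0 \to Y^0$ exists as a morphism of varieties. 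Replacing $Y^0$ by its normalization and shrinking $X^0$ accordingly, we may assume $Y^0$ normal and $\varphi$ proper with connected fibers. By construction the fiber of $\varphi$ through a general point $x$ equals its chain-connected class $\mathrm{Locus}_m(x)$, $m\gg 0$, so any two of its points are joined by a connected chain of finitely many $\K$-lines, as required.

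The main obstacle is precisely the step from the set-theoretic relation to an honest proper morphism: one must exhibit a proper closed subset outside of which the chain-connected classes form an equidimensional closed family and distinct classes do not collide, so that $X^0$ and $Y^0$ can be defined. This is where the stabilization of $\dim \mathrm{Locus}_m(x)$ and the properness of the spaces $\mathrm{Chain}_m$ do the real work, the subtle point being to control the locus where chains emanating from $x$ jump in dimension or where limits of chains glue two a priori different classes together; everything else (connectedness of the fibers, normality of the base, the ``almost holomorphic'' nature of $\varphi$) is then formal from the quotient construction.
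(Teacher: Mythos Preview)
The paper does not give its own proof of this statement: Theorem~\ref{RC} is quoted as a preliminary result from the literature, with attribution to \cite{Ca, KMM}, and is used as a black box in the proof of Theorem~\ref{MT}. So there is nothing in the paper to compare your argument against.

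That said, your outline is essentially the standard Campana/Koll\'ar construction of the rationally connected quotient (here specialized to a covering family of lines): build the spaces of chains from the universal family, use properness of the Hilbert scheme of lines to get a proper prerelation on $X\times X$, stabilize the chain-locus dimension over a dense open, and invoke the existence theorem for quotients by proper prerelations (cf.\ \cite{Ca}, \cite{KMM}, or \cite[IV.4]{Ko}). The one place where your sketch is a bit glib is the passage from the set-theoretic equivalence to an actual proper morphism; you correctly flag this as the subtle step, and in the references it is handled by Campana's compactness/Chow-scheme argument or, in the algebraic category, by Koll\'ar's theory of proper prerelations. For the purposes of this paper, simply citing those sources, as the author does, is the appropriate level of detail.
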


We shall call the morphism $\varphi$ a {\it rationally connected fibration with respect to $\K$}.

\begin{them}\cite[Theorem~2]{BCD}\label{BCD} Under the condition and notation of Theorem~\ref{RC}, assume that $3 \geq \dim Y^0$. Then $\RR_{\geq 0}[\K]$ is extremal in the sense of Mori theory and the associated contraction
yields a rationally connected fibration with respect to $\K$. 
\end{them}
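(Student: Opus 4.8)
The plan is to separate the statement into two assertions --- that $\RR_{\geq 0}[\K]$ is an extremal ray of $\overline{NE}(X)$, and that its contraction recovers the rationally connected fibration $\varphi$ of Theorem~\ref{RC} --- and to feed the hypothesis $\dim Y^0\leq 3$ into the first. First I would record the basic numerical features of $\K$. Being a family of lines, $\K$ is \emph{unsplit}: a curve with $\O_X(1)\cdot\K=1$ cannot degenerate to a reducible or non-reduced $1$-cycle, so every member is an irreducible reduced line and the family is proper. Since $\K$ is covering, its general member is free, so $-K_X\cdot\K\geq 2$; thus $[\K]$ lies in the $K_X$-negative part of $\overline{NE}(X)$, where the Cone Theorem provides local polyhedrality and contractibility of $K_X$-negative extremal rays. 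It therefore suffices to prove that $\RR_{\geq 0}[\K]$ is one of these rays and that its contraction is $\varphi$.

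For the numerical side I would control the classes of curves contracted by $\varphi$. The crucial observation is that, $\K$ being unsplit, any connected chain of $\K$-lines has class a nonnegative integral multiple of $[\K]$; hence a general fibre $F$ of $\varphi$, which is rc$\K$-connected, satisfies $[\K]\in N_1(F,X)$ (the image of $N_1(F)$ in $N_1(X)$) with all of its chain classes proportional to $[\K]$. The content of the theorem is to upgrade this to $N_1(F,X)=\RR[\K]$ and, more importantly, to promote the merely almost-holomorphic map $\varphi\colon X^0\to Y^0$ to an honest morphism defined on all of $X$ whose associated ray is extremal. This is where $\dim Y^0\leq 3$ enters: I would combine the fine dimension estimates for loci of chains of the unsplit family $\K$ (the bound $\dim\mathrm{Locus}(\K_x)\geq -K_X\cdot\K-1$ and its iteration, via bend-and-break) with the inequality relating $\rho_X$ and $\rho_{Y^0}$ to the fibre data, the largeness $\dim F\geq n-3$ then forcing the numerical contribution of the fibres to collapse onto $\RR[\K]$. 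Producing from this collapse a nef supporting class that vanishes precisely on $\RR_{\geq 0}[\K]$ exhibits it as a face, i.e. an extremal ray.

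Once $\RR_{\geq 0}[\K]$ is known to be $K_X$-negative and extremal, the Contraction Theorem yields a morphism $\mathrm{cont}\colon X\to W$ onto a normal projective variety contracting exactly the curves of class in $\RR_{\geq 0}[\K]$. Because $\K$ is covering, $\mathrm{cont}$ passes a contracted line through every point, so it is of fibre type. I would then identify its fibres with those of $\varphi$: every positive-dimensional fibre of $\mathrm{cont}$ is covered by $\K$-lines and hence rc$\K$-connected, while each general fibre of $\varphi$ has all its curves in $\RR_{\geq 0}[\K]$ by the previous step and is therefore contracted by $\mathrm{cont}$. Thus $\mathrm{cont}$ extends $\varphi$ and has the same general fibres, so it is a rationally connected fibration with respect to $\K$; if one wants the fibres identified up to isomorphism, this can be reinforced by the rigidity Theorem~\ref{rigidity}.

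The main obstacle is the middle step. The mere existence of the rc$\K$-fibration does not by itself force $\RR_{\geq 0}[\K]$ to be extremal --- for large $\dim Y^0$ the almost-holomorphic fibration can fail to be a Mori contraction --- and the whole force of the hypothesis $\dim Y^0\leq 3$ is to make the passage from $\varphi$ to a genuine extremal contraction go through. Concretely, the delicate points are the extension of $\varphi$ across its indeterminacy locus and the verification that the contracted face is one-dimensional; both are handled by the chain-locus dimension count constrained by $\dim F\geq n-3$, after which the projectivity and normality of the target are formal consequences of the Contraction Theorem.
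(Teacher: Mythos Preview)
The paper does not prove this theorem. It is quoted verbatim as \cite[Theorem~2]{BCD} from Bonavero--Casagrande--Druel, and the paper merely \emph{applies} it in Section~3 to conclude that the rationally connected fibration with respect to $\K$ is an extremal contraction. There is therefore no ``paper's own proof'' to compare against; your proposal is an attempt to reconstruct the proof of the cited external result, not of anything argued in this paper.

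As a sketch of the BCD argument your outline is in the right spirit (unsplitness of $\K$, $K_X$-negativity, chain-locus dimension estimates forcing the face to be one-dimensional when $\dim Y^0\le 3$, then the Contraction Theorem), and you correctly identify the promotion of the almost-holomorphic $\varphi$ to a genuine morphism as the crux. One misstep: your closing appeal to Theorem~\ref{rigidity} is out of place. The BCD theorem is a general statement about covering (quasi-)unsplit families of rational curves and has nothing to do with rational homogeneous varieties; rigidity of fibres plays no role in its proof. That theorem enters the present paper only later, in the proof of Theorem~\ref{MT}, to identify the fibres of the resulting contraction with $A$.
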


\begin{rem} \rm 
Remark that the original statements of Theorem~\ref{ample}, \ref{RC} and \ref{BCD} were dealt in more general situations. 
\end{rem}

\section{Proof of Theorem~\ref{MT}}

For a subset $V 	\subset X$, denote the closure by $\overline{V}$.  
Let $P_1(t)$, $P_2(t)$ be the Hilbert polynomials of a line, $(A, \mathscr{O}_{A}(1))$, respectively and set $\PP(t):=(P_1(t),P_2(t))$. We denote the natural projections by 
\begin{eqnarray}
p_i: {\rm FH}_{\PP(t)}(X) \rightarrow {\rm Hilb}_{P_i(t)}(X),~{\rm where}~i=1,2.\end{eqnarray} 
Let $\H$ be the open subscheme of ${\rm Hilb}_{P_2(t)}(X)$ parametrizing smooth subvarieties of $X$ with Hilbert polynomial $P_2(t)$. Now we work under the assumption that $X$ satisfies $\rm (a)$ or $\rm (b)$ in Theorem~\ref{MT}.

\begin{cl}\label{fam} In both cases $\rm (a)$ and $\rm (b)$, there exists a curve $C \subset \H$ which contains a point $o$ corresponding to a subvariety isomorphic to $(A,\mathscr{O}_A(1))$.
\end{cl}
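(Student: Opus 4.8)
The plan is to distinguish the two hypotheses and produce the curve $C \subset \H$ by different geometric mechanisms. In case (b) the existence is almost immediate: the point $o \in \H$ corresponding to $Z$ is known, and the first-order deformations of $Z$ in $X$ are governed by $H^0(Z, N_{Z/X})$, while obstructions lie in $H^1(Z, N_{Z/X})$. Since $N_{Z/X}$ is nef of rank one on $Z \cong A$ (the codimension of $Z$ in $X$ is one, because $\dim Z = n-1$ and $\dim X = n$), and $A$ is a rational homogeneous variety, $N_{Z/X}$ is a nef line bundle on $A$, hence either trivial or a positive multiple of $\mathscr{O}_A(1)$. In either case one gets $h^0(A, N_{Z/X}) \geq 1$; and in fact one wants to produce a positive-dimensional family, which follows because a nef line bundle on a Fano manifold of Picard number one that is numerically nontrivial is ample, hence has many sections, while if it is trivial then $Z$ moves by a well-known argument (the deformations of $Z$ sweep out $X$ since $h^0$ of the trivial bundle is $1$... here one must be a little careful). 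The cleanest route: invoke that $\H$ is smooth at $o$ of dimension $h^0(A,N_{Z/X})$ when $h^1$ vanishes, or more robustly just note $\dim_o \H \geq h^0(A,N_{Z/X}) - h^1(A,N_{Z/X}) = \chi(A, N_{Z/X})$, which by Riemann–Roch and nefness is at least... and then take a general curve through $o$ in $\H$.

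**For case (a)**, the relevant object is the flag Hilbert scheme ${\rm FH}_{\PP(t)}(X)$ with its two projections $p_1$ (to lines) and $p_2$ (to copies of $A$-type subvarieties). The hypothesis says that through a general point $x \in X$ there passes a subvariety $Z_x \cong A$; I would first show this forces the projection $p_2$ restricted to an appropriate component, composed with the evaluation to $X$, to be dominant, so that the component $\Sigma \subset \H$ parametrizing these $Z_x$ has $\dim \Sigma + \dim A \geq \dim X = n$, hence $\dim \Sigma \geq 1$. Then a general curve $C$ through a general point of $\Sigma$ does the job — except one must arrange that $C$ passes through (or its closure contains) a point $o$ corresponding to an honest $A$; since the $Z_x$ themselves are isomorphic to $A$ by hypothesis, the generic point of $\Sigma$ already corresponds to such a subvariety and lies in the open set $\H$, so we may simply take $o$ to be a general point of $C$ rather than a special one. (If the statement really wants $o$ fixed in advance, one intersects with a hyperplane section of $\H$ through $o$.)

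**The main obstacle** is the case-(a) dimension bound: one needs to know that the subvarieties $Z_x$ sweeping out $X$ actually move in a positive-dimensional family inside $\H$, i.e. that they are not all equal to a single fixed $Z$ (which would force $Z = X$, impossible since $\dim Z < \dim X$) — this is where "through a general point" is used, together with $\dim A = n - 1 < n$. Concretely: let $W \subset \H \times X$ be the incidence variety $\{([Z'], x') : x' \in Z'\}$ for a component of $\H$ hitting the general point, with projections $q_1, q_2$; the hypothesis gives that $q_2$ is dominant, each fiber of $q_2$ has dimension $\dim W - n$, and the fibers of $q_1$ have dimension $n-1$, so $\dim(\text{that component of } \H) = \dim W - (n-1) \geq n - (n-1) = 1$. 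The only subtlety is ensuring the component of $\H$ we pick genuinely parametrizes subvarieties isomorphic to $(A, \mathscr{O}_A(1))$ and not some degeneration — but since $\H$ is the \emph{open} subscheme of the Hilbert scheme parametrizing \emph{smooth} subvarieties with Hilbert polynomial $P_2(t)$, and Theorem~\ref{rigidity} (rigidity of rational homogeneous varieties of Picard number one) propagates the isomorphism type to nearby smooth fibers, a neighborhood of $o$ inside $\H$ automatically consists of such subvarieties; so after passing to that neighborhood the argument closes. Finally, in case (b) the analogous subtlety is whether $N_{Z/X}$ could be trivial, giving only $\chi = \chi(\mathscr{O}_A) = 1$ and a possibly $0$-dimensional component; here I would argue that a trivial normal bundle together with $Z$ being Fano forces (by a standard deformation/Ehresmann argument, or by noting $-K_X|_Z = -K_Z$ is ample and $Z$ moves in $|Z|$-type families) the existence of a nontrivial deformation regardless — alternatively one observes that the single point $o$ is itself a (zero-dimensional) "curve" is not allowed, so one uses that the nef but non-ample case cannot actually occur for a divisor $Z$ with the stated properties unless $X$ is already a projective bundle, in which case the fibration statement (v) is reached directly and a curve in $\H$ is evident from the $\PP^1$-worth of... — I would isolate this as the technical heart and handle it by the $h^0 \geq 1$ bound plus the observation that $h^0 = 1$ with a nef line bundle whose sections all vanish on a divisor is impossible since the unique section is nowhere zero, forcing $N_{Z/X} = \mathscr{O}_A$ and then the section spans a $1$-parameter family of disjoint copies of $Z$, yielding $C$.
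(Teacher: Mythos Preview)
Your overall strategy matches the paper's: case (a) by a dimension count on the incidence locus, case (b) by deformation theory governed by $N_{Z/X}$. For (a) you in fact make explicit the count that the paper leaves implicit (the paper just observes $\overline{{\rm Locus}(\H_0)}=X$ and takes a curve through $o=[Z_x]$). Note also that the Claim only requires $C$ to pass through \emph{one} point corresponding to a genuine copy of $A$; the statement that \emph{every} point of $C$ parametrizes a copy of $A$ is established afterward in the paper via Theorem~\ref{rigidity}, so your side discussion about propagating the isomorphism type is not needed here.

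For (b), though, you talk yourself into a spurious difficulty. The paper's argument is simply: $N_{Z/X}$ is a nef line bundle on $A$, hence either trivial or ample (since $\rho(A)=1$); in the trivial case $h^1(\O_A)=0$ because $A$ is Fano, and in the ample case $h^1=0$ by Kodaira vanishing. Either way $h^1(N_{Z/X})=0$ and $h^0(N_{Z/X})\geq 1$, so $\H$ is smooth at $o=[Z]$ of dimension $h^0\geq 1$, and a curve through $o$ exists. Your worry that the trivial case yields ``only $\chi=1$ and a possibly $0$-dimensional component'' is a slip: with $h^1=0$ the dimension is exactly $h^0=1$, not $0$, so the component through $o$ is already a curve. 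The detour through projective bundles, nowhere-vanishing sections, and disjoint copies is unnecessary once $h^1=0$ is secured.
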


\begin{proof} If the assumption $\rm (a)$ holds, there exists an irreducible component $\H_0$ of $\H$ which contains $o :=[Z_x]$ for some $x \in X$ and satisfies $\overline{{\rm Locus}(\H_0)}=X$. Then we can take a curve $C \subset \H_0$ which contains $o$. If the assumption $\rm (b)$ holds, we see that $h^1(N_{Z/X})=0$ and $h^0(N_{Z/X}) \geq 1$. Since there is no obstruction in the deformation of $Z$ in $X$, it turns out that $\H$ is smooth at $o:=[Z]$ and $\dim_{[Z]} \H \geq 1$. Then we can also take a curve $C \subset \H_0$ which contains $o$.
\end{proof}

From now on, we shall not use the assumptions $\rm (a)$ and $\rm (b)$ except the property proved in Claim~\ref{fam}. Note that $\overline{{\rm Locus}(C)}=X$. Denote by $\H_0$ an irreducible component of $\H$ which contains $C$. 
For the universal family $\pi: \U_0 \rightarrow \H_0$ and the normalization $\nu : \tilde{C} \rightarrow C \subset \H_0$, we denote $\tilde{C} \times_{\H_0} \U_0$ by $\U_{\tilde{C}}$ and a natural projection by $\tilde{\pi}: \U_{\tilde{C}} \rightarrow \tilde{C}$. Let $(\U_{\tilde{C}})_{\rm red}$ be the reduced scheme associated to $\U_{\tilde{C}}$ and $\Pi : (\U_{\tilde{C}})_{\rm red} \rightarrow \tilde{C}$ the composition of $\tilde{\pi}$ and $(\U_{\tilde{C}})_{\rm red} \rightarrow \U_{\tilde{C}}$. Then we have the following diagram:

\[\xymatrix{
(\U_{\tilde{C}})_{\rm red} \ar[r] \ar[dr]_{\Pi} &\U_{\tilde{C}} \ar[r] \ar[d]_{\tilde{\pi}} & \U_0 \ar[d]_\pi \\
& \tilde{C} \ar[r]_{\nu} &  \H_0 \\
} \]

Now we have an isomorphism between scheme theoretic fibers $\tilde{\pi}^{-1}(p) \cong \pi^{-1}(\nu (p))$ for any closed point $p \in \tilde{C}$. In particular,  $\tilde{\pi}^{-1}(p)$ is a smooth projective variety and $\tilde{\pi}^{-1}(\tilde{o}) \cong A$ for a point $\tilde{o} \in \tilde{C}$ corresponding to $o \in C$. Moreover a natural morphism $\Pi^{-1}(p) \rightarrow \tilde{\pi}^{-1}(p)$ is a homeomorphic closed immersion for any closed point $p \in \tilde{C}$. Since $\tilde{\pi}^{-1}(p)$ is reduced, we see that $\Pi^{-1}(p) \cong \tilde{\pi}^{-1}(p)$. Thus it concludes that $\Pi$ is a proper flat morphism whose fibers on closed points are smooth projective varieties, that is, a proper smooth morphism. Because $\Pi$ admits a central fiber $\tilde{\Pi}^{-1}(\tilde{o}) \cong A$, it follows that every fiber $\tilde{\Pi}^{-1}(\tilde{p})$ is isomorphic to $A$ from Theorem~\ref{rigidity}. Hence it turns out that every fiber of $\pi$ over a closed point in $C$ is isomorphic to $A$. 
Let consider a constructible subset $p_1(p_2^{-1}(C)) \subset {\rm Hilb}_{P_1(t)}(X)$. Since $C$ parametrizes subvarieties isomorphic to $(A, \mathscr{O}_A(1))$ which is covered by lines, we see that $\overline{{\rm Locus}(p_1(p_2^{-1}(C)))}=X$. 

\begin{cl}\label{family} There exists a covering family of lines $\K$ on $X$ satisfying the following property: 
Through a general point $x \in X$, there is a subvariety $S_x \subset X$ so that $(S_x, \mathscr{O}_{S_x}(1)) \cong (A, \mathscr{O}_{A}(1))$ and any line lying in $S_x$ is a member of $\K$.
\end{cl}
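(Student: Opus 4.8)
The plan is to produce the covering family $\K$ by taking a suitable irreducible component of the family of lines swept out by the subvarieties parametrized by $C$, and then to upgrade the set-theoretic statement "through a general point there is an $A$-type subvariety" to the stronger statement that all lines in that subvariety lie in a \emph{single} component $\K$. First I would work on the flag Hilbert scheme: since $C$ parametrizes smooth subvarieties isomorphic to $(A,\O_A(1))$ and each such $A$ is covered by its own lines, the constructible set $p_1(p_2^{-1}(C))\subset {\rm Hilb}_{P_1(t)}(X)$ satisfies $\overline{{\rm Locus}(p_1(p_2^{-1}(C)))}=X$, as already observed. So its closure in ${\rm Hilb}_{P_1(t)}(X)$ contains an irreducible component $\K$ with ${\rm Locus}(\K)=X$, i.e.\ a covering family of lines. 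The point to check is that $\K$ really is an irreducible component of $F_1(X)$ (not just of the constructible locus), but this is automatic: any irreducible component of the closure of $p_1(p_2^{-1}(C))$ whose locus is all of $X$ is contained in some component of $F_1(X)$, and by maximality of dimension among line-families covering $X$ we may take $\K$ itself to be such a component.

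Next I would establish the compatibility between $\K$ and the $A$-type subvarieties. The key geometric input is \emph{rigidity under specialization}: because ${\rm Pic}(A)\cong\ZZ[\O_A(1)]$, the family of lines on $A$ is irreducible (for each of the finitely many $A$ in Theorem~\ref{ample} this is classical), so $p_2^{-1}(C)$ has, over a general point of $C$, an irreducible fiber isomorphic to the variety of lines on $A$. Therefore $p_2^{-1}(C)$ has a distinguished irreducible component dominating $C$, and its image under $p_1$ is irreducible; call $\K$ the component of $F_1(X)$ containing it. For a general point $x\in X$, choose (using $\overline{{\rm Locus}(C)}=X$) a subvariety $S_x$ in the family over $C$ passing through $x$ with $(S_x,\O_{S_x}(1))\cong(A,\O_A(1))$; since the variety of lines on $S_x$ is irreducible and meets $\K$ (indeed, is swept into $\K$ by construction for general $S_x$), \emph{every} line in $S_x$ is a member of $\K$. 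Passing from "general $S_x$" to "general $x$" is harmless because the subvarieties over $C$ already dominate $X$.

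The main obstacle I expect is the last compatibility step: a priori the lines on a \emph{special} member $S_x$ of the family over $C$ might jump into a different component of $F_1(X)$ than the lines on the general member. To rule this out I would use the irreducibility of the variety of lines on $A$ together with properness: the incidence scheme parametrizing pairs (point of $C$, line in the corresponding subvariety) is proper over $C$ with irreducible general fiber, hence has a unique component $\mathcal{I}$ dominating $C$, and for \emph{every} $t\in C$ the whole fiber of $\mathcal{I}\to C$ over $t$ — which by rigidity is isomorphic to the variety of lines on $A$ and hence irreducible and equal to the \emph{entire} space of lines in $S_t$ — maps into the single component $\K:=\overline{p_1(\mathcal{I})}$. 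Thus there is no jumping, and the claim follows for every member over $C$, in particular for a general-point subvariety $S_x$. This also records the needed fact that $\K$ is genuinely a covering family, completing the proof.
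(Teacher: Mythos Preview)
Your argument is correct and follows essentially the same strategy as the paper's: both hinge on the irreducibility of the variety of lines on $A$ (the paper cites \cite[Theorem~4.3]{LM} and \cite[Theorem~1]{E} for this), and both pass from a component of $p_1(p_2^{-1}(C))$ to a containing component $\K$ of ${\rm Hilb}_{P_1(t)}(X)$. The technical packaging differs slightly: the paper fixes a component $\K^0$ of $p_1(p_2^{-1}(C))$ covering $X$, picks through a general $x$ a line $[l_x]\in\K^0$ lying in no other component, finds $[S_x]\in C$ with $l_x\subset S_x$, and then uses irreducibility of $p_1(p_2^{-1}([S_x]))$ together with $[l_x]\in\K^0$ to force $p_1(p_2^{-1}([S_x]))\subset\K^0$; you instead argue upstairs that $p_2^{-1}(C)\to C$ is proper with every fibre irreducible of the same dimension, hence has a unique dominating component whose fibre over each $t$ is the full variety of lines on $S_t$, so that $p_1$ carries all these lines into a single irreducible set. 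Your route is marginally cleaner and yields the conclusion for every $t\in C$, not only for the member through a general $x$, though this extra strength is not used.

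One small imprecision: your parenthetical justifies irreducibility of the variety of lines only ``for each of the finitely many $A$ in Theorem~\ref{ample}'', but the claim is stated for an arbitrary rational homogeneous $A$ of Picard number one; the irreducibility holds in that generality by the references above. Also, your first paragraph already names a component $\K$ and then the second and third paragraphs redefine it; in a final write-up you should settle on the incidence-scheme construction from the outset and drop the provisional first definition.
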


\begin{proof} Take an irreducible component $\K^0$ of $p_1(p_2^{-1}(C))$ such that $\overline{{\rm Locus}(\K^0)}=X$. 
Through a general point $x$ on $X$, there is a line $[l_x] $ in $\K^0$ which is not contained in any irreducible component of $p_1(p_2^{-1}(C))$ except $\K^0$. Furthermore there is also a subvariety $[S_x]$ in $C$ containing $l_x$. Because $p_1(p_2^{-1}([S_x]))$ is the Hilbert scheme of lines on $S_x$, it is irreducible (see \cite[Theorem~4.3]{LM} and \cite[Theorem~1]{E}). Therefore $p_1(p_2^{-1}([S_x]))$ is contained in an irreducible component of $p_1(p_2^{-1}(C))$. Since $p_1(p_2^{-1}([S_x]))$ contains $[l_x]$, this implies that $p_1(p_2^{-1}([S_x]))$ is contained in $\K^0$. 
Thus we put $\K$ as an irreducible component of ${\rm Hilb}_{P_1(t)}(X)$ containing $\K^0$.
\end{proof}

Two points on $S_x \cong A$ can be joined by a connected chain of lines in $\K$. It implies that the relative dimension of the rationally connected fibration $\varphi: X \cdots \rightarrow Y$ with respect to $\K$ is at least $n-1$. According to Theorem~\ref{BCD}, $\RR_{\geq 0}[\K]$ spans an extremal ray of $NE(X)$ and $\varphi$ is its extremal contraction. In particular, $\varphi$ is a morphism which contracts $S_x$ to a point. If $\dim Y=0$, we see that the Picard number of $X$ is one. It implies that $S_x$ is a very ample divisor on $X$. From Theorem~\ref{ample}, $X$ is $\PP^n$, $Q^n$, $G(1,\PP^m)$ or $E_6(\omega_1)$. If $\dim Y=1$, then $Y$ is a smooth curve $C$ and a general fiber of $\varphi$ coincides with $S_x$. Therefore $\varphi$ is a $A$-fibration on a smooth curve $C$. Hence Theorem~\ref{MT} holds. \\

{\bf Acknowledgements}
The author would like to thank Professor Hajime Kaji for a valuable seminar. The author is supported by Research Fellowships of the Japan Society for the Promotion of Science for Young Scientists. 
\begin{footnotesize}

\end{footnotesize}

\end{document}